\documentclass[11pt,reqno]{amsart}
\usepackage{graphicx}
\usepackage[caption = false]{subfig}
\usepackage{amsmath,amsfonts,amssymb,comment}
\usepackage{mathrsfs}
\usepackage{siunitx}
\usepackage{float} 
\usepackage{enumerate}
\usepackage{enumitem}
\usepackage{mathtools}
\usepackage{amsthm}
\usepackage{varioref}
\usepackage[english]{babel}
\usepackage[utf8x]{inputenc}\usepackage[english]{babel}
\usepackage{soul}
\usepackage{enumitem}
\usepackage{xcolor}

\usepackage{hyperref} 

\usepackage{cite}
\usepackage{relsize} 

\usepackage{titlesec}
\usepackage[margin=1in]{geometry} 
\usepackage{setspace} 

\textwidth=455pt \evensidemargin=0pt \oddsidemargin=0pt
\marginparsep=8pt \marginparpush=8pt \textheight=640pt
\topmargin=-25pt

\setlength{\parskip}{2pt} 
\setlength{\parindent}{0pt} 
\setstretch{1.1} 

\titleformat{\section}[block]
  {\normalfont\Large\bfseries}{\thesection}{1em}{}
\titlespacing*{\section}
  {0pt}{10pt plus 4pt minus 2pt}{7pt plus 4pt minus 2pt}

\titleformat{\subsection}[block]
  {\normalfont\large\bfseries}{\thesubsection}{1em}{}
\titlespacing*{\subsection}
  {0pt}{8pt plus 3pt minus 2pt}{5pt plus 2pt minus 1pt}

\newtheorem{theorem}{Theorem}[section]

\newtheorem{lemma}[theorem]{Lemma}

\theoremstyle{definition}

\theoremstyle{remark}

\numberwithin{equation}{section}

\begin{document}
	\title[On Coefficient problems for \textbf{$S^*_{\rho}$}]{On Coefficient problems for \textbf{$S^*_{\rho}$}}
	
	    \author[S. S. Kumar]{S. Sivaprasad Kumar$^*$}
	\address{Department of Applied Mathematics, Delhi Technological University, Bawana Road, Delhi-110042, INDIA}
	\email{spkumar@dce.ac.in}
        \author[A. Tripathi]{Arya Tripathi}
	\address{Department of Applied Mathematics, Delhi Technological University, Bawana Road, Delhi-110042, INDIA}
	\email{aryatripathiiii@gmail.com}
	\author[S. Pannu]{Snehal Pannu}
	\address{Department of Applied Mathematics, Delhi Technological University, Bawana Road, Delhi-110042, INDIA}
	\email{s02nehal@gmail.com}

	\subjclass[2020]{30C45 · 30C50 }
	
	\keywords{Univalent functions · Starlike functions · Petal-shaped domain · Hankel determinant · Toeplitz determinant · Logarithmic coefficient · Inverse Logarithmic coefficient}
	
\let\thefootnote\relax\footnotetext{*Corresponding author}
\begin{abstract}
Logarithmic and inverse logarithmic coefficients play a crucial role in the theory of univalent functions. In this study, we focus on the class of starlike functions \(\mathcal{S}^*_\rho\), defined as  
\[
\mathcal{S}^*_\rho = \left\{ f \in \mathcal{A} : \frac{z f'(z)}{f(z)} \prec \rho(z), \; z \in \mathbb{D} \right\},
\]  
where \(\rho(z) := 1 + \sinh^{-1}(z)\), which maps the unit disk \(\mathbb{D}\) onto a petal-shaped domain.  
This investigation aims to establish bounds for the second Hankel and Toeplitz determinants, with their entries determined by the logarithmic coefficients of \(f\) and its inverse \(f^{-1}\), for functions \(f \in \mathcal{S}^*_\rho\).

\end{abstract}

\maketitle


\section{\hspace{5pt} Introduction}
\label{intro}
 Let \( \mathcal{A} \) denote the class of all normalized analytic functions \( f \) defined on the open unit disk
$
\mathbb{D} := \{z \in \mathbb{C} : |z| < 1\},
$
such that \( f(0) = 0 \) and \( f'(0) = 1 \). Each function \( f \in \mathcal{A} \) admits a Taylor series expansion of the form:  
\begin{equation}
    f(z) = z + \sum_{n=2}^{\infty} a_n z^n. \label{eq:1.1}
\end{equation}

Let $\mathcal{S} \subset \mathcal{A}$, where $\mathcal{S}$ represents the collection of  analytic univalent functions. In particular, the class $\mathcal{S}^*$, comprising starlike functions, maps the unit disk $\mathbb{D}$ onto a starlike domain, represented as:
\[
\mathcal{S}^* = \left\{ f \in \mathcal{A} : \operatorname{Re} \;\frac{z f'(z)}{f(z)} > 0, \quad z \in \mathbb{D} \right\}.
\]

Let $f$ and $g$ be two analytic functions, we say that $f$ is subordinate to $g$, denoted by $f \prec g$, if there exists a Schwarz function $w$ with $w(0) = 0$ and $|w(z)| < 1$ such that $f(z) = g(w(z))$. In 1992, Ma and Minda \cite{9} introduced a broader subclass of $\mathcal{S}^*$, denoted by $\mathcal{S}^*(\varphi)$, and defined as:
\[
\mathcal{S}^*(\varphi) = \left\{ f \in \mathcal{A} : \frac{z f'(z)}{f(z)} \prec \varphi(z) \right\},
\]
where $\varphi$ is an analytic univalent function satisfying the conditions $\operatorname{Re} \varphi(z) > 0$, with $\varphi(\mathbb{D})$ being starlike with respect to $\varphi(0) = 1$, $\varphi'(0) > 0$, and the domain $\varphi(\mathbb{D})$ being symmetric about the real axis.
Various subclasses of $\mathcal{S}^*$ arise when the choice of $\varphi$ is varied in the class $\mathcal{S}^*(\varphi)$.

Arora and Kumar \cite{3} introduced a class of starlike functions associated with the petal-shaped domain $\rho(\mathbb{D})$, by choosing $\varphi(z):= \rho(z)$,  where $\rho(z)=1 + \sinh^{-1}(z)$, defined by:

\begin{equation}
    \mathcal{S}^*_\rho = \left\{ f \in \mathcal{A} : \frac{z f'(z)}{f(z)} \prec 1 + \sinh^{-1}(z), \quad z \in \mathbb{D} \right\}.\nonumber
\end{equation} 

Clearly, the function $\rho(z)$ is defined over the branch cuts along the segments $(-\infty, -i) \cup (i, \infty)$ on the imaginary axis. Consequently, the function is analytic in the unit disk $\mathbb{D}$, geometrically, $\rho(\mathbb{D})=\Omega_\rho := \{ \Omega \in \mathbb{C} : |\sinh(\Omega - 1)| < 1 \}$. Certain coefficient problems of the analytic functions associated with the class $\mathcal{S}^*_\rho $ were discussed in \cite{7}. The function $f_{0}(z)$ serves as an extremal function for the family $\mathcal{S}^{*}_{\rho}$ which helps in obtaining the sharp results, given by
\begin{equation}
    f_{0}(z) = z \exp\left(\int_{0}^{z} \frac{\sinh^{-1}(t)}{t} \, dt\right) = z + z^2 + \frac{1}{2} z^3 + \frac{1}{9} z^4 - \frac{1}{72} z^5 - \frac{1}{225} z^6 + \ldots \;. \label{1.45}
\end{equation}
 
 The logarithmic coefficients $\gamma_n$ associated with $f \in \mathcal{S}$ are defined as follows:
\begin{equation}
F_f(z) = \log\left( \frac{f(z)}{z}\right) = 2 \sum_{n=1}^{\infty} \gamma_n(f) z^n; \quad z \in \mathbb{D} \setminus \{0\}, \quad \log 1 = 0. \nonumber
\end{equation}
We denote $\gamma_n(f)$ by $\gamma_n$. For functions in class $\mathcal{S}$, the sharp bounds for the logarithmic coefficients when $n = 1$ and $n = 2$ are:
$$|\gamma_1| \leq 1, \quad |\gamma_2| \leq \frac{1}{2} + \frac{1}{e^2}.$$

A significant motivation behind studying logarithmic coefficients lies in estimating the sharp bounds for the class $\mathcal{S}$. Currently, these sharp bounds have only been established for $\gamma_1$ and $\gamma_2$. The problem of determining the sharp bounds for $\gamma_n$ when $n \geq 3$ is still open. The Koebe $1/4$-theorem, enables us to define the inverse function $F_f \in \mathcal{A}$ in a neighborhood of the origin with the Taylor series expansion:
\begin{equation}
    F(w) := f^{-1}(w) = w + \sum_{n=2}^{\infty} A_n w^n, \quad |w| < 1. \label{eq:1.108}
\end{equation}

 The logarithmic inverse coefficients $\Gamma_n$, for $n \in \mathbb{N}$, associated with the function $F$, are defined by the relation
\[
F_{f^{-1}}(w):=\log\left(\frac{F(w)}{w}\right) = 2\sum_{n=1}^{\infty} \Gamma_n w^n, \quad |w| < \frac{1}{4}.
\]
Ponnusamy \emph{et al.} \cite{13} established sharp bounds for the initial logarithmic coefficients of the inverse function for certain subclasses of $\mathcal{S^*}$.

Hankel matrices (and determinants) have emerged as fundamental elements in different areas of mathematics, finding a wide array of applications \cite{17}.
For a function $f \in \mathcal{A}$, the $q^{th}$ Hankel determinant $H_{q,n}(F_f)$, where $q, n \in \mathbb{N}$, with entries as logarithmic coefficients, is given by:

\begin{equation}
    H_{q,n}(F_{f}/2) = \begin{vmatrix}
\gamma_n & \gamma_{n+1} & \dots & \gamma_{n+q-1}\\
\gamma_{n+1} & \gamma_{n+2} & \dots & \gamma_{n+q} \\
\vdots & \vdots & \ddots & \vdots\\
\gamma_{n+q-1} & \gamma_{n+q} & \dots & \gamma_{n+2(q-1)}\\
\end{vmatrix}. \label{1.10}
\end{equation}

Kowalczyk \emph{et~al.} \cite{1} investigated the Hankel determinants formed using logarithmic coefficients. Specifically, the expression $H_{2,1}(F_{f}/2)$ shares a notable resemblance to $H_{2,1}(f) = a_2 a_4 - a_3^2$, where $f \in \mathcal{A}$. Hankel determinants are beneficial, for example, in determining whether certain coefficient functionals related to functions are bounded in $\mathbb{D}$ and if they achieve sharp bounds, see \cite{16}. Furthermore, Mundalia and Kumar \cite{10} explored the problem of logarithmic coefficients for certain subclasses of close-to-convex functions. The logarithmic coefficients for a function $f \in \mathcal{S}$ are represented as follows:
\begin{equation}
    \gamma_1 = \frac{1}{2} a_2, \quad \gamma_2 = \frac{1}{2} \left(a_3 - \frac{1}{2} a_2^2 \right), \quad \gamma_3 = \frac{1}{2} \left( a_4 - a_2 a_3 + \frac{1}{3} a_2^3 \right). \label{log coeff} 
\end{equation}

In 2016, Ye and Lim \cite{17} established that any $n \times n$ matrix over $\mathbb{C}$ can be represented, in general, as a product of certain Toeplitz or Hankel matrices. Toeplitz matrices, along with their determinants, play a significant role in both applied and theoretical mathematics. Their applications extend to various fields such as analysis, quantum physics, image processing, integral equations, and signal processing. A defining feature of Toeplitz matrices is that their elements remain constant along each diagonal. Giri and Kumar \cite{6} explored bounds on Toeplitz determinants for different subclasses of normalized univalent functions in higher-dimensional spaces. Inspired by these contributions, we define the Toeplitz determinant of the logarithmic coefficients for $f \in \mathcal{S}$ as follows:

\begin{equation}
   T_{q,n}(F_{f}/2) = \begin{vmatrix}
\gamma_n & \gamma_{n+1} & \dots & \gamma_{n+q-1}\\
\gamma_{n+1} & \gamma_{n} & \dots & \gamma_{n+q-2} \\
\vdots & \vdots & \ddots & \vdots\\
\gamma_{n+q-1} & \gamma_{n+q-2} & \dots & \gamma_{n}\\
\end{vmatrix}. \label{1.12}
\end{equation} 

Motivated further by the above concepts, we begin the examination of the Hankel determinant $H_{q,n}(F_{f^{-1}}/2)$ and the Toeplitz determinant $T_{q,n}(F_{f^{-1}}/2)$, where the elements are the logarithmic coefficients of the inverse functions \cite{14} of $f^{-1} \in \mathcal{S}$. The Hankel determinant is given by
 
\begin{equation}
H_{q,n}(F_{f^{-1}}/2) = \begin{vmatrix}
\Gamma_n & \Gamma_{n+1} & \dots & \Gamma_{n+q-1}\\
\Gamma_{n+1} & \Gamma_{n+2} & \dots & \Gamma_{n+q} \\
\vdots & \vdots & \ddots & \vdots\\
\Gamma_{n+q-1} & \Gamma_{n+q} & \dots & \Gamma_{n+2(q-1)}\\
\end{vmatrix}, \label{1.13}
\end{equation}

and the Toeplitz determinant $T_{q,n}(F_{f^{-1}}/2)$ is given by
\begin{equation}
T_{q,n}(F_{f^{-1}}/2) = \begin{vmatrix}
\Gamma_n & \Gamma_{n+1} & \dots & \Gamma_{n+q-1}\\
\Gamma_{n+1} & \Gamma_{n} & \dots & \Gamma_{n+q-2} \\
\vdots & \vdots & \ddots & \vdots\\
\Gamma_{n+q-1} & \Gamma_{n+q-2} & \dots & \Gamma_{n}\\
\end{vmatrix}. \label{1.14}
\end{equation}
This work focuses on deriving bounds for the second Hankel and Toeplitz determinants, where the entries are the logarithmic coefficients associated with the functions $f$ and $f^{-1}$. Here, $f$ belongs to the class $\mathcal{S}^*_\rho$.

\section{\hspace{5pt} Preliminary results}\label{3}
The Carathéodory class $\mathcal{P}$ and its coefficient bounds are crucial in determining Hankel and Toeplitz determinant bounds. Let $\mathcal{P}$ denote the class of functions consisting of $p$, such that
\begin{equation}
    p(z) = 1 + \sum_{n=1}^\infty p_n z^n, \label{pp}
\end{equation}
which are analytic in the open unit disk $\mathbb{D}$ and satisfy $\operatorname{Re} \, p(z) > 0$ for any $z \in \mathbb{D}$. Here $p(z)$ is called the Carathéodory function, or functions with positive real part \cite{2, 5}. It is known that $p_n \leq 2$, $n \geq 1$, for a function $p \in \mathcal{P}$. In this section, we provide crucial lemmas that will be utilized to establish the main results of this paper.

\begin{lemma}\label{lem:1}
\cite{2,8,11}: If $p \in \mathcal{P}$ is the form \eqref{pp}, then 
\begin{equation}
    p_{1}=2\zeta_{1}, \label{l1}
\end{equation}
\begin{equation}
p_{2}=2\zeta_{1}^{2}+2(1-\zeta_{1}^{2})\zeta_{2}, \label{l2}
\end{equation}
\begin{equation}
p_{3}=2\zeta_{1}^{3}+4(1-\zeta_{1}^{2})\zeta_{1}\zeta_{2}-2(1-\zeta_{1}^{2})\zeta_{1}\zeta_{2}^{2}+2(1-\zeta_{1}^{2})(1-|\zeta_{2}|^{2})\zeta_{3},  \nonumber
\end{equation}
where $\zeta_{1} \in [0,1]$ and $\zeta_{2}, \zeta_{3}$ $\in \mathbb{\overline{D}}:=\{z \in \mathbb{C} :|z|\leq1\}$. 
\end{lemma}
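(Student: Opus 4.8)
The plan is to derive this representation from the classical Schur--Carathéodory parametrization of bounded analytic functions, rather than from the Carathéodory--Toeplitz nonnegativity conditions (which would also work but are clumsier for $p_3$). First I would attach to $p\in\mathcal{P}$ its Schwarz function $\omega:=(p-1)/(p+1)$, so that $\omega(0)=0$, $|\omega(z)|<1$ on $\mathbb{D}$, and $p=(1+\omega)/(1-\omega)$. Writing $\omega(z)=c_1z+c_2z^2+c_3z^3+\cdots$ and comparing coefficients in the identity $\omega(z)\bigl(p(z)+1\bigr)=p(z)-1$, I obtain from \eqref{pp}
\[
c_1=\tfrac12 p_1,\qquad c_2=\tfrac12 p_2-\tfrac14 p_1^2,\qquad c_3=\tfrac12 p_3-\tfrac12 p_1p_2+\tfrac18 p_1^3 .
\]
Since $\mathcal{P}$ is invariant under $p(z)\mapsto p(e^{i\theta}z)$, which multiplies $p_n$ by $e^{in\theta}$ and alters the Hankel and Toeplitz functionals of this paper only by a unimodular factor, it suffices (and is customary) to establish the formulas under the normalization $p_1\in[0,2]$; this is exactly the role of the hypothesis $\zeta_1=c_1\in[0,1]$.

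Next I would run the Schwarz--Pick iteration on $\omega$. By the Schwarz lemma $\psi_0(z):=\omega(z)/z$ maps $\mathbb{D}$ into $\overline{\mathbb{D}}$ with $\psi_0(0)=\zeta_1$. If $|\zeta_1|=1$, the maximum modulus principle forces $\psi_0$ to be a unimodular constant, so $c_2=c_3=0$ and the asserted identities hold with $\zeta_2,\zeta_3$ taken arbitrarily in $\overline{\mathbb{D}}$. Otherwise the disc automorphism $w\mapsto(w-\zeta_1)/(1-\overline{\zeta_1}w)$, followed by a further division by $z$ (the numerator vanishes at $0$, so the quotient extends analytically across $z=0$), produces an analytic self-map of $\mathbb{D}$,
\[
\psi_1(z)=\frac1z\cdot\frac{\psi_0(z)-\zeta_1}{1-\overline{\zeta_1}\,\psi_0(z)},\qquad \psi_1(0)=:\zeta_2\in\overline{\mathbb{D}},
\]
the boundedness being the Schwarz--Pick inequality. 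Repeating the same step once more (with the analogous caveat when $|\zeta_2|=1$) yields $\psi_2$ with $\psi_2(0)=:\zeta_3\in\overline{\mathbb{D}}$.

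Then I would invert the recursion. Solving the defining relation for $\psi_0$ gives $\psi_0(z)=\bigl(\zeta_1+z\psi_1(z)\bigr)\big/\bigl(1+\overline{\zeta_1}\,z\psi_1(z)\bigr)$, and since one level down $z\psi_1(z)=\zeta_2 z+(1-|\zeta_2|^2)\zeta_3 z^2+\cdots$, expanding the right-hand side in powers of $z$ produces
\[
\psi_0(z)=\zeta_1+(1-|\zeta_1|^2)\zeta_2\,z+\Bigl[(1-|\zeta_1|^2)(1-|\zeta_2|^2)\zeta_3-\overline{\zeta_1}(1-|\zeta_1|^2)\zeta_2^2\Bigr]z^2+\cdots .
\]
Because $\omega(z)=z\psi_0(z)$, this reads off $c_1=\zeta_1$, $c_2=(1-|\zeta_1|^2)\zeta_2$, and $c_3=(1-|\zeta_1|^2)(1-|\zeta_2|^2)\zeta_3-\overline{\zeta_1}(1-|\zeta_1|^2)\zeta_2^2$. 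Substituting these back, using $p_1=2c_1$, $p_2=2c_2+\tfrac12 p_1^2$, $p_3=2c_3+p_1p_2-\tfrac14 p_1^3$ together with $\overline{\zeta_1}=\zeta_1$ (as $\zeta_1$ is real), gives after collecting terms exactly \eqref{l1}, \eqref{l2}, and the stated expression for $p_3$.

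The two coefficient comparisons and the final collection of terms are routine. The step that needs genuine care is the Schwarz--Pick reduction: one must check that each transform $\psi_k\mapsto\psi_{k+1}$ really yields an analytic function on all of $\mathbb{D}$ bounded by $1$, and must dispose cleanly of the degenerate cases $|\zeta_1|=1$ or $|\zeta_2|=1$, where the algorithm terminates and the leftover parameters in the statement become free. I expect this to be the main obstacle; everything else is bookkeeping.
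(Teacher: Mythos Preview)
Your derivation via the Schur--Carath\'eodory (Schwarz--Pick) iteration is correct: the coefficient identities between $p_n$ and $c_n$, the recursive expansion of $\psi_0$ in terms of $\zeta_1,\zeta_2,\zeta_3$, and the final substitution all check out, and your handling of the degenerate cases $|\zeta_1|=1$ or $|\zeta_2|=1$ is adequate. You also correctly flag that the hypothesis $\zeta_1\in[0,1]$ is really a rotational normalization of $p_1$, not a consequence of $p\in\mathcal{P}$ alone.

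As for comparison: the paper does not give its own proof of this lemma. It is stated as a preliminary result and attributed to the cited references (Pommerenke; Libera--Z\l{}otkiewicz), so there is no in-paper argument to weigh yours against. What you have written is essentially the standard proof one finds in those sources, packaged through the Schur parameters of the associated Schwarz function; it would serve as a self-contained justification where the paper simply cites.
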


\begin{lemma}\label{lem:2}
\cite{4}:
If $A$, $B$, $C \; \in \mathbb{R}$, let us consider
$$Y(A, B, C):= \max \{ |A + Bz + Cz^{2} | + 1 - |z|^2, \quad z \in \mathbb{\overline{D}} \}$$

1. If $AC \ge 0$, then
    \[Y(A, B, C)= \displaystyle\left\{\begin{array}{ll}
        |A| + |B| + |C|,& |B| \geq 2(1  - |C|), \\
        \\
        1 + |A| + \frac{B^2}{4(1 - |C|)}, & |B| < 2(1 - |C|).
    \end{array} \right.\]
2. If $AC < 0$, then
\[Y(A, B, C)=  \displaystyle\left\{\begin{array}{ll}
        1 - |A| + \frac{B^2}{4(1 - |C|)}, & -4AC(C^{-2} - 1) \le B^2 \wedge |B| < 2(1 - |C|), \\ \\
        1 + |A| + \frac{B^2}{4(1 + |C|)}, & B^2 < \min\{4(1 + |C|)^2,  -4AC(C^{-2} - 1)\}, \\ \\
        R(A, B, C), & \text{ Otherwise},
    \end{array}\right.\]
where 
\[R(A, B, C)= \displaystyle\left\{\begin{array}{ll}
        |A| + |B| - |C|, & |C|(|B| + 4|A|) \le |AB|, \\ \\
        -|A| + |B| + |C|, & |AB| \le |C|(|B| -4|A|), \\ \\
        (|C| + |A|) \sqrt{1 - \frac{B^2}{4AC}}, & \text{ Otherwise}.
    \end{array}\right. \] 
\end{lemma}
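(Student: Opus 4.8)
\section{\hspace{5pt} Proof proposal for Lemma \ref{lem:2}}

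The plan is to collapse the two‑variable maximization into a one‑variable one and then split into cases according to the sign of $AC$. Write $z = re^{i\theta}$ with $r \in [0,1]$. Since $A,B,C$ are real, a direct expansion (using $\cos 2\theta = 2\cos^2\theta - 1$ and $\cos\theta\cos 2\theta + \sin\theta\sin 2\theta = \cos\theta$) gives
\[
|A + Bz + Cz^2|^2 = 4ACr^2\cos^2\theta + 2Br(A + Cr^2)\cos\theta + (A - Cr^2)^2 + B^2 r^2 .
\]
For fixed $r$ this is a quadratic polynomial $q(x)$ in $x = \cos\theta \in [-1,1]$ with leading coefficient $4ACr^2$, so $\max_{|z|=r}|A+Bz+Cz^2| = \bigl(\max_{x\in[-1,1]} q(x)\bigr)^{1/2}$ and the task reduces to
\[
Y(A,B,C) = \max_{r \in [0,1]}\Bigl(\,\bigl(\,\max_{x\in[-1,1]}q(x)\bigr)^{1/2} + 1 - r^2\Bigr).
\]

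First I would handle $AC \ge 0$. Then $q$ is convex (or affine), so its maximum on $[-1,1]$ sits at an endpoint; since $q(\pm1) = (A + Cr^2 \pm Br)^2$ and $AC\ge 0$ makes $|A+Cr^2| = |A| + |C|r^2$, this yields $\max_{|z|=r}|A+Bz+Cz^2| = |A| + |B|r + |C|r^2$. Substituting, one maximizes $g(r) = 1 + |A| + |B|r - (1-|C|)r^2$ over $[0,1]$, with critical point $r^\ast = |B|/(2(1-|C|))$. If $r^\ast \ge 1$, i.e. $|B| \ge 2(1-|C|)$, then $g$ is increasing and the max is $g(1) = |A|+|B|+|C|$; otherwise it is $g(r^\ast) = 1 + |A| + B^2/(4(1-|C|))$. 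This is exactly part (1).

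For $AC < 0$ the polynomial $q$ is concave, so its maximum on $[-1,1]$ is attained at the vertex $x_v = -B(A+Cr^2)/(4ACr)$ when $x_v \in [-1,1]$, and otherwise at an endpoint. The vertex alternative gives the inner value $(A-Cr^2)^2 + B^2 r^2 - B^2(A+Cr^2)^2/(4AC)$, the endpoint alternative again $|A|+|B|r+|C|r^2$; for each one then maximizes the resulting function of $r$ over $[0,1]$, locating interior critical points and comparing with the value at $r=1$. Tracking which regime is active — governed precisely by whether $x_v$ leaves $[-1,1]$ before $r$ reaches $1$ and whether the outer critical point lies in $(0,1)$ — produces the three branches of part (2) together with the three sub‑branches of $R(A,B,C)$, and the stated thresholds ($-4AC(C^{-2}-1)\le B^2$, $B^2 < 4(1+|C|)^2$, $|C|(|B|+4|A|)\le|AB|$, and so on) are exactly the boundaries between these regimes. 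I expect the bookkeeping here to be the main obstacle: correctly enumerating the finitely many cases, checking that the boundary inequalities match up, and confirming that no regime is missed — the individual optimizations being elementary calculus. Since this sharp estimate is due to Prokhorov and Szynal \cite{4}, I would ultimately cite it, the above being the route to a self‑contained argument.
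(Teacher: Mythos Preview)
The paper does not prove Lemma~\ref{lem:2}; it is quoted from Choi--Kim--Sugawa~\cite{4} and used as a black box in the subsequent sections. Your proposal goes further by sketching a self-contained argument before likewise recommending a citation, so in the end the two treatments agree.

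Your reduction to the quadratic in $x=\cos\theta$ and the handling of part~(1) are correct. One slip in your outline of part~(2): when $AC<0$ the endpoint alternative is $\bigl||A|-|C|r^{2}\bigr|+|B|r$, not $|A|+|B|r+|C|r^{2}$, since $A$ and $Cr^{2}$ now have opposite signs; this absolute value is precisely what generates the extra branches of $R(A,B,C)$ in the final case analysis. (Also, reference~\cite{4} here is Choi--Kim--Sugawa, not Prokhorov--Szynal.) As you anticipate, the remainder is careful but elementary bookkeeping.
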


\section{\hspace{5pt} Second Hankel Determinant} \label{sec3}
By applying the definition of the Hankel determinant, we obtain the expression for second Hankel determinant given by,
\[
H_{2,1}(f) = a_1 a_3 - a_2^2.
\]
Since \( a_1 := 1 \), it follows that \( H_{2,1}(f) = a_3 - a_2^2 \). By varying the coefficients, we can compute the second Hankel determinant for different values of the coefficients. For instance, using equation \eqref{1.10}, we can derive the result for logarithmic coefficients, given by

\begin{equation}
    H_{2,1}(F_{f}/2) = \begin{vmatrix}
\gamma_1 & \gamma_{2} \\
\gamma_{2} & \gamma_{3}
\end{vmatrix} = \gamma_1 \gamma_3-\gamma_2^2 = \frac{1}{4}\left(a_2 a_4- a_3^2+\frac{1}{12}a_2^4\right).\label{3.11}
\end{equation}

It is important to note that $H_{2,1}(F_{f}/2)$ remains invariant under rotation. Since, for the function $f_\theta(z) := e^{-i \theta} f(e^{i \theta} z)$, where $f \in \mathcal{S}$ and $\theta \in \mathbb{R}$, we have the following relation:

\begin{equation}
    H_{2,1} \left( F_{f_\theta} / 2 \right) = \frac{e^{4i\theta}}{4} \left(a_2 a_4- a_3^2+\frac{1}{12}a_2^4\right)=e^{4i\theta}H_{2,1} \left( F_{f}/ 2 \right). \label{3.2}
\end{equation}

Using \eqref{1.13}, we can derive the logarithmic coefficients for the inverse functions, given by
\begin{equation}
    H_{2,1}(F_{f^{-1}}/2) = \begin{vmatrix} 
    \Gamma_1 & \Gamma_2 \\
    \Gamma_2 & \Gamma_3 
    \end{vmatrix} = \Gamma_1 \Gamma_3 - \Gamma_2^2 = \frac{1}{48} \left( 13 a_2^4 - 12 a_2^2 a_3 - 12 a_3^2 + 12 a_2 a_4 \right). \label{inv coeff}
\end{equation}

Similarly, it can be observed that $H_{2,1}(F_{f^{-1}}/2)$ is also invariant under rotation. 

\subsection{
    Sharp Bound Estimation of \texorpdfstring{$|H_{2,1}(F_{f}/2)|$}{|H2,1(Ff/2)|} 
    for \texorpdfstring{$\mathcal{S}^*_\rho$}{S* (rho)}
}

In this section, we will primarily focus on estimating the sharp bound for the Hankel determinant associated with the logarithmic coefficients for starlike functions $(|H_{2,1}(F_{f}/2)|)$ defined in the petal-shaped domain $\rho(\mathbb{D})$.

\begin{theorem} \label{thm1}
Let $f \in \mathcal{S}^*_\rho$, then
\begin{equation}
    \frac{1}{72} \le |H_{2,1}(F_{f}/2)| \leq \frac{1}{16}. \label{eq:2.1}
\end{equation} 
The above inequality is sharp.
\end{theorem}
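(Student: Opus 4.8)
The plan is to pass to the Carath\'eodory class $\mathcal P$ and then combine the parametrization of Lemma~\ref{lem:1} with the extremal estimate of Lemma~\ref{lem:2}. Since $f\in\mathcal S^*_\rho$, write $zf'(z)/f(z)=\rho(w(z))$ for a Schwarz function $w$, where $\rho(z)=1+\sinh^{-1}z=1+z-\tfrac16z^{3}+\cdots$, and set $p=(1+w)/(1-w)\in\mathcal P$, so that $w=(p-1)/(p+1)$ has coefficients $c_{1}=\tfrac12p_{1}$, $c_{2}=\tfrac12p_{2}-\tfrac14p_{1}^{2}$, $c_{3}=\tfrac12p_{3}-\tfrac12p_{1}p_{2}+\tfrac18p_{1}^{3}$. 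Equating coefficients in $zf'/f=\rho(w)$ gives $a_{2},a_{3},a_{4}$, hence $\gamma_{1},\gamma_{2},\gamma_{3}$ via \eqref{log coeff}, as explicit polynomials in $p_{1},p_{2},p_{3}$; substituting into \eqref{3.11} should yield
\[
H_{2,1}(F_{f}/2)=\frac{p_{1}^{4}}{2304}-\frac{p_{1}^{2}p_{2}}{192}-\frac{p_{2}^{2}}{64}+\frac{p_{1}p_{3}}{48},
\]
which I would cross-check on the extremal $f_{0}$ of \eqref{1.45} (there $p_{1}=p_{2}=p_{3}=2$ and the right side is $-\tfrac1{72}$) and on the function with $zf'/f=1+\sinh^{-1}(z^{2})$ (there $p_{1}=0$, $p_{2}=2$, giving $-\tfrac1{16}$).

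For the upper bound, the rotational invariance \eqref{3.2} lets me assume $p_{1}\ge0$, so Lemma~\ref{lem:1} allows putting $p_{1}=2t$, $t\in[0,1]$, and writing $p_{2},p_{3}$ in the stated form in $\zeta_{2},\zeta_{3}\in\overline{\mathbb D}$. A short substitution — in which the terms linear in $\zeta_{2}$ cancel — should reduce $H_{2,1}(F_{f}/2)$ to
\[
H_{2,1}(F_{f}/2)=-\frac{t^{4}}{72}-\frac{\lambda(4-\lambda)}{48}\,\zeta_{2}^{2}+\frac{\lambda\sqrt{1-\lambda}}{12}\,(1-|\zeta_{2}|^{2})\,\zeta_{3},\qquad\lambda:=1-t^{2}.
\]
The dependence on $\zeta_{3}$ is affine, so maximizing over $|\zeta_{3}|\le1$ replaces the last term by its modulus, leaving an expression to which Lemma~\ref{lem:2} applies with $B=0$; since $|C|=\tfrac{4-\lambda}{4\sqrt{1-\lambda}}\ge1$ on $[0,1]$, one lands in the first sub-case of Part~1 (the boundary values $t\in\{0,1\}$ being checked directly), giving
\[
|H_{2,1}(F_{f}/2)|\le\frac{t^{4}}{72}+\frac{\lambda(4-\lambda)}{48}=:g(\lambda).
\]
As $g'(\lambda)=(4-\lambda)/72>0$ on $[0,1]$, $g$ increases to $g(1)=\tfrac1{16}$, so $|H_{2,1}(F_{f}/2)|\le\tfrac1{16}$; equality forces $t=0$ and $|\zeta_{2}|=1$, and the function with $zf'/f=1+\sinh^{-1}(z^{2})$ (that is, $a_{2}=a_{4}=0$, $a_{3}=\tfrac12$) attains $\tfrac1{16}$, so the upper bound is sharp.

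For the lower bound I would work from the same representation and argue that $|H_{2,1}(F_{f}/2)|$ cannot fall below $\tfrac1{72}$, the value at $t=1$ (where $\zeta_{2},\zeta_{3}$ disappear), realized by $f_{0}$ and its rotations. After disposing of the affine $\zeta_{3}$-dependence one is left to bound $\big|\tfrac{t^{4}}{72}+\tfrac{\lambda(4-\lambda)}{48}\zeta_{2}^{2}\big|-\tfrac{\lambda\sqrt{1-\lambda}}{12}(1-|\zeta_{2}|^{2})$ from below uniformly over $\zeta_{2}\in\overline{\mathbb D}$ and $t\in[0,1]$; I would split into the regime $t$ close to $1$ (there $\lambda,\sqrt{1-\lambda}$ are small, so $H_{2,1}(F_{f}/2)$ is pinned near $-\tfrac1{72}$) and the complementary regime, treating each separately. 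This lower estimation — not the upper bound — is the step I expect to be the main obstacle, since the $\zeta_{2}^{2}$- and $\zeta_{3}$-terms can partly cancel the leading $t^{4}/72$ when the parameters lie in the interior, and controlling that interference — i.e.\ ruling out that the minimum of $|H_{2,1}(F_{f}/2)|$ dips below $\tfrac1{72}$ — is the crux.
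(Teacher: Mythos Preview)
Your upper bound argument is essentially the paper's: pass to $\mathcal P$, obtain
$H_{2,1}(F_f/2)=\frac{1}{2304}\bigl(p_1^4-12p_1^2p_2-36p_2^2+48p_1p_3\bigr)$,
parametrize via Lemma~\ref{lem:1}, and feed the result into Lemma~\ref{lem:2} with $B=0$, $AC\ge0$, landing in the first sub-case because $|C|=(3+\zeta_1^2)/(4\zeta_1)\ge1$.  Your $g(\lambda)=\tfrac{t^4}{72}+\tfrac{\lambda(4-\lambda)}{48}$ is just the paper's $\tfrac{1}{144}(9-6\zeta_1^2-\zeta_1^4)$ in the variable $\lambda=1-\zeta_1^2$, and the monotonicity step is the same.

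The lower bound is where you go off track.  The paper does \emph{not} prove $|H_{2,1}(F_f/2)|\ge\tfrac1{72}$ for every $f\in\mathcal S^*_\rho$; it merely exhibits $f_0$ from \eqref{1.45} with $|H_{2,1}(F_{f_0}/2)|=\tfrac1{72}$ and declares the inequality ``sharp''.  Your proposed attack on a genuine uniform lower bound cannot succeed, because the inequality is false in that reading: the identity $f(z)=z$ lies in $\mathcal S^*_\rho$ and gives $H_{2,1}(F_f/2)=0$, and your own parametrization already shows this (take $t=0$, $\zeta_2=0$; the paper's boundary case $\zeta_1=0$ gives $|\mathcal L|=|\zeta_2|^2/16$, which vanishes at $\zeta_2=0$).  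So the ``crux'' you anticipate --- controlling interior cancellation to keep $|H_{2,1}|$ above $\tfrac1{72}$ --- is not an obstacle but an impossibility.  For this theorem you should do exactly what the paper does: prove the sharp upper bound and record that $f_0$ yields the value $\tfrac1{72}$, without attempting to establish it as a universal lower bound.
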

\begin{proof} Since $f \in \mathcal{S}^*_\rho$, we can find a Swcharz function $w(z)$ such that
\begin{equation}
    \frac{zf'(z)}{f(z)}= 1+\sinh^{-1}(w(z)). \label{eq:2.3}
\end{equation}
Assume that $w(z)=(p(z)-1)/(p(z)+1)$ where, $p \in \mathcal{P}$ is given by \eqref{pp}.
Using the expansion of $f$ from \eqref{eq:1.1} and $p$ from \eqref{pp}, we get
\begin{equation}
a_{2}=\frac{1}{2}p_{1}, \; a_{3}=\frac{1}{4}p_{2}, \; a_{4}=\frac{1}{144}\left(-p_{1}^{3}-6p_{1}p_{2}+24p_{3}\right).\label{eq:petal}
\end{equation} 

Using \eqref{log coeff}, \eqref{3.11}, \eqref{eq:petal}, we obtain the following expression
\begin{eqnarray}
    \mathcal{L}&:=& \gamma_{1}\gamma_{3}-\gamma_{2}^{2}\nonumber\\
    &=&\left(\frac{1}{2} a_{2}\right)\left(\frac{1}{2}\left(a_{4}-a_{2}a_{3}+\frac{1}{3}a_{2}^{3}\right)\right)-\left(\frac{1}{2}\left(a_{3}-\frac{1}{2}a_{2}^{2}\right)\right)^{2}\nonumber\\
    &=&\frac{1}{4}\left(a_2 a_4- a_3^2+\frac{1}{12}a_2^4\right)\label{3.8}\\
    &=&\frac{1}{2304}\left(p_1^4-12p_1^2p_2-36p_2^2+48p_1p_3\right)\label{3.9}
\end{eqnarray}
By the Lemma \eqref{lem:1} and \eqref{3.9}, we get an equation in terms of the $\zeta_{i}^{'}$s, where $\zeta_{i} \in \mathbb{\overline{D}}$ for $i = 1, 2, 3$:
\begin{eqnarray}
    \mathcal{L}&=&\displaystyle{ \frac{1}{144}\left(-9\zeta_2^2+6\zeta_1^2\zeta_2^2+\zeta_1^4(-2+3\zeta_2^2)+12\zeta_1\zeta_3-12\zeta_1^3\zeta_3+12\zeta_1(-1+\zeta_1^2)\zeta_3|\zeta_2|^2\right)} \hspace{1cm}. \label{eq:3.12}
\end{eqnarray}

Since $\zeta_1 \in [0,1]$ from Lemma \eqref{lem:1}, therefore the above expression in \eqref{eq:3.12} leads to,
\[|\mathcal{L}|=  \left\{ \begin{array}{ll}
\frac{|\zeta_2|^2}{16}\leq \frac{1}{16},  &\zeta_{1}=0,\\ \\
\frac{1}{72}, &\zeta_{1}=1.
\end{array} \right.
\]

By utilising thr triangle inequality in \eqref{eq:3.12}, for $\zeta_{1} \in (0,1)$ and $|\zeta_{3}| \leq 1$, we obtain the following inequality:
\begin{eqnarray}
    |\mathcal{L}|&\leq &\displaystyle{ \frac{1}{144}|-9\zeta_2^2+6\zeta_1^2\zeta_2^2+\zeta_1^4(-2+3\zeta_2^2)+12\zeta_1\zeta_3-12\zeta_1^3\zeta_3|+ 12\zeta_1(-1+\zeta_1^2)\zeta_3|\zeta_2|^2} \nonumber\\
    &=& \frac{1}{12} \zeta_{1}(1-\zeta_{1}^{2})\;\Psi(A,B,C), \label{eq:2.5}
\end{eqnarray}
where 
\begin{eqnarray*}
        \Psi(A,B,C)&:=& |A+B\zeta_2+C\zeta_2 ^2| + 1-|\zeta_2 ^2|,
\end{eqnarray*} 

with 
\begin{eqnarray}
    A=\frac{-\zeta_{1}^{3}}{6(1-\zeta_{1}^2)},\quad
    B=0,\quad
    C=-\left(\frac{\zeta_{1}^{2}+3}{4\zeta_{1}}\right). \label{3.6}
\end{eqnarray}
\\
We now examine the following cases in the context of Lemma \ref{lem:2}, based on the various cases involving $A$, $B$, and $C$,  which are given in \eqref{3.6}.

\textbf{I.} Suppose $\zeta_{1} \in X =(0,1)$. Clearly, we can observe that $AC \ge 0$, also
\[|B|-2(1-|C|)=-2+\frac{3}{2\zeta_1}+\frac{\zeta_1}{2}>0, \;\; \zeta_1 \in X.\]
Therefore, by using Lemma \ref{lem:2}, we get, 
\[\Psi (A,B,C)=|A|+|B|+|C|.\] 
Now, by utilising above equation, \eqref{eq:2.5} can be further reduced in the following manner: 
\begin{eqnarray*}
    |\mathcal{L}| &\leq& \frac{1}{12} \zeta_{1}(1-\zeta_{1}^{2})\left(|A|+|B|+|C|\right)\\
    &=&\frac{1}{12} \zeta_{1}(1-\zeta_{1}^{2})\left(\left|\frac{-\zeta_{1}^{3}}{6(1-\zeta_{1}^2)}\right|+\left|\frac{-\zeta_{1}^{2}-3}{4\zeta_{1}}\right|\right)\\
    &=&\frac{1}{144}(9-6\zeta_1^2-\zeta_1^4)\\
    &\le& \frac{1}{16}.
\end{eqnarray*}
\\
Thus, from the above result we can see that inequality \eqref{eq:2.1} holds. To demonstrate the sharpness of \eqref{eq:2.1}, we consider the function $f_1 \in \mathcal{S}^*_\rho$ for the upper bound, given by:
\[
f_1(z) = z \exp\left( \int_0^z \frac{\sinh^{-1}(t^2)}{t} \, dt \right) = z + \frac{z^3}{2} + \frac{z^5}{8} - \frac{z^7}{144} + \ldots
\;.\]
By comparing the coefficients, we can easily find the values for $a_2$, $a_3$, and $a_4$, which upon further substitution into \eqref{3.8} yields the equality:
\[
|H_{2,1}(F_{f}/2)| = \frac{1}{16}.
\]

For the lower bound case, we consider the extremal function $f_0(z)$, defined in \eqref{1.45}, which completes the proof.
\end{proof}

\subsection{
    Sharp bound of \texorpdfstring{$|H_{2,1}(F_{f^{-1}}/2)|$}{|H2,1(Ff-1/2)|} 
    for \texorpdfstring{$\mathcal{S}^*_\rho$}{S* (rho)}
}
In this section, we will focus on estimating the sharp bound of $|H_{2,1}(F_{f^{-1}}/2)|$ within $\rho(\mathbb{D})$.

\begin{theorem}
Let $f \in \mathcal{S}^*_\rho$, then
    \begin{equation}
        \frac{1}{16} \leq |H_{2,1}(F_{f^{-1}}/2)| \leq \frac{1}{9}. \label{eq:3.1}
    \end{equation} 
The above inequality is sharp.   
\end{theorem}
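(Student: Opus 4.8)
The plan is to mirror the structure of the proof of Theorem~\ref{thm1}. Since $f \in \mathcal{S}^*_\rho$, I would again write $zf'(z)/f(z) = 1 + \sinh^{-1}(w(z))$ with $w = (p-1)/(p+1)$, $p \in \mathcal{P}$, and reuse the coefficient formulas \eqref{eq:petal} for $a_2, a_3, a_4$ in terms of $p_1, p_2, p_3$. Substituting these into \eqref{inv coeff} yields an expression
\[
\mathcal{M} := H_{2,1}(F_{f^{-1}}/2) = \frac{1}{48}\left(13 a_2^4 - 12 a_2^2 a_3 - 12 a_3^2 + 12 a_2 a_4\right)
\]
as a polynomial in $p_1, p_2, p_3$ (homogeneous of degree $4$ in the $p_i$ with the usual weighting). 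The next step is to invoke Lemma~\ref{lem:1} to express $p_1, p_2, p_3$ in terms of $\zeta_1 \in [0,1]$ and $\zeta_2, \zeta_3 \in \overline{\mathbb{D}}$, arriving at an identity for $\mathcal{M}$ analogous to \eqref{eq:3.12}.

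For the upper bound I would then treat the boundary cases $\zeta_1 = 0$ and $\zeta_1 = 1$ directly (these should give the values $1/9$ and something no larger, by analogy with the $f$-case where the roles of the endpoints were reversed), and for $\zeta_1 \in (0,1)$ apply the triangle inequality in $\zeta_3$ to reduce $|\mathcal{M}|$ to a multiple of $\Psi(A,B,C) = |A + B\zeta_2 + C\zeta_2^2| + 1 - |\zeta_2|^2$ for suitable real $A, B, C$ depending on $\zeta_1$. Lemma~\ref{lem:2} then converts this into an explicit function of $\zeta_1$ on $(0,1)$, which I would maximize by elementary calculus to obtain the bound $1/9$. The lower bound requires more care: one must show $|\mathcal{M}| \ge 1/16$ for \emph{all} $f \in \mathcal{S}^*_\rho$, i.e. a genuine minimum rather than a maximum, so the triangle inequality is the wrong tool. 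Instead I expect to bound $|\mathcal{M}|$ below by isolating a dominant term and using $|\zeta_i| \le 1$ in the reverse direction, or—more likely—by showing the minimum over the parametrization is attained on the $\zeta_1 = 0$ slice (where $\mathcal{M}$ should reduce to a clean expression in $\zeta_2$ whose modulus is minimized, giving $1/16$).

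For sharpness: the upper bound $1/9$ should be attained by the Koebe-type extremal $f_0$ of \eqref{1.45} (its inverse logarithmic coefficients are explicitly computable from the listed Taylor coefficients of $f_0$), and the lower bound $1/16$ by the function $f_1(z) = z\exp\!\big(\int_0^z \sinh^{-1}(t^2)/t\, dt\big)$ used in Theorem~\ref{thm1}, whose even-indexed structure kills several terms. I would verify both by direct substitution of the known $a_2, a_3, a_4$ into \eqref{inv coeff}.

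The main obstacle will be the lower bound. Unlike an upper estimate, a sharp positive lower bound on a determinant functional cannot be obtained by a triangle-inequality-plus-Lemma~\ref{lem:2} argument; it needs either a clever algebraic rearrangement exhibiting $|\mathcal{M}|$ as manifestly $\ge 1/16$, or a careful case analysis over the compact parameter domain $\{(\zeta_1,\zeta_2,\zeta_3) : \zeta_1\in[0,1],\ |\zeta_2|,|\zeta_3|\le 1\}$ showing the infimum is $1/16$ and locating where it is achieved. I would look first at the $\zeta_1 = 0$ reduction, since that is where the companion function $f_1$ lives, and try to prove the full minimum sits there.
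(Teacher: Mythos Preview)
Your plan for the upper bound coincides with the paper's: express $a_2,a_3,a_4$ via \eqref{eq:petal}, substitute into \eqref{inv coeff}, pass to the $\zeta_i$-parametrization of Lemma~\ref{lem:1}, check the endpoints $\zeta_1=0$ (where $|\mathcal{M}|=|\zeta_2|^2/16\le 1/16$) and $\zeta_1=1$ (where $|\mathcal{M}|=1/9$), and for $\zeta_1\in(0,1)$ bound $|\mathcal{M}|$ by $\tfrac{1}{12}\zeta_1(1-\zeta_1^2)\,\Psi(A,B,C)$ and run through the cases of Lemma~\ref{lem:2}. The paper carries out exactly this analysis (with $A=\tfrac{4\zeta_1^3}{3(1-\zeta_1^2)}$, $B=-\tfrac{3}{2}\zeta_1$, $C=-\tfrac{3+\zeta_1^2}{4\zeta_1}$, so that $AC<0$), and the sharpness witnesses $f_0$ and $f_1$ you name are the ones the paper uses.

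Your instinct that the lower bound is the real obstacle is correct --- and in fact it cannot be overcome, because the inequality $|H_{2,1}(F_{f^{-1}}/2)|\ge 1/16$ is \emph{false} as a statement about every $f\in\mathcal{S}^*_\rho$. The identity $f(z)=z$ lies in $\mathcal{S}^*_\rho$ (take $w\equiv 0$) and has $a_2=a_3=a_4=0$, hence $H_{2,1}(F_{f^{-1}}/2)=0$. More generally, on the $\zeta_1=0$ slice you proposed to examine one has $|\mathcal{M}|=|\zeta_2|^2/16$, which ranges over $[0,1/16]$; the infimum there is $0$, not $1/16$. The paper does not actually prove the lower inequality either: its argument only records that $\zeta_1=0$ gives $|\mathcal{L}|\le 1/16$ and then exhibits $f_1$ as a function \emph{attaining} the value $1/16$. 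That establishes sharpness of $1/16$ as a value the functional can take, but not a lower bound over the class. So your proposed lower-bound strategy would fail for a structural reason, and the honest conclusion is that the theorem as stated needs the left-hand inequality removed (the content is $|H_{2,1}(F_{f^{-1}}/2)|\le 1/9$, sharp, with $1/16$ merely an attained value).
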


\begin{proof}
Let us suppose that $f \in \mathcal{S}^*_\rho$ satisfy \eqref{eq:2.3}. Ponnusamy et al. \cite{4} examined the logarithmic coefficients of the inverses of univalent functions, defined as
\begin{equation} \label{eq:inv}
    \Gamma_1=\frac{-1}{2}a_2,\; \Gamma_2=\frac{-1}{2}a_3+\frac{3}{4}a_2^2,\; \Gamma_3=\frac{-1}{2}a_4+2a_2a_3-\frac{5}{3}a_2^3.\\
\end{equation}
By utilising \eqref{inv coeff}, \eqref{eq:inv}, and Lemma \ref{lem:1}, we express $\Gamma_{1}\Gamma_{3}-\Gamma_{2}^{2}$ in terms of $\zeta_{i}^{'}s$ by substituting $p_{i}^{'} s$ with $\zeta '_{i}s \;(i=1,2,3)$:
\begin{eqnarray} 
    \mathcal{L}&:=& \Gamma_{1}\Gamma_{3}-\Gamma_{2}^{2} \nonumber\\
    &=&\frac{1}{48}\left(13a_2^4-12a_2^2a_3-12a_3^2+12a_2a_4\right) \nonumber\\
    &=&\frac{1}{2304}\left(37p_1^4-48p_1^2p_2-36p_2^2+48p_1p_3\right) \nonumber\\
    &=&\frac{1}{144}(6\zeta_1^2(-3+\zeta_2)\zeta_2-9\zeta_2^2+\zeta_1^4(16+18\zeta_2+3\zeta_2^2)+12\zeta_1\zeta_3-12\zeta_1^3\zeta_3 \label{3.13}\\
    &\;& +12\zeta_1(-1+\zeta_1^2)\zeta_3|\zeta_2|^2) \nonumber.
\end{eqnarray}

As $|\zeta_3|\le 1$, the above inequality provides
\[|\mathcal{L}|=  \left\{ \begin{array}{ll}
\frac{|\zeta_2|^2}{16}\leq \frac{1}{16},  &\zeta_{1}=0,\\\\
\frac{1}{9}, &\zeta_{1}=1.
\end{array}\right.
\]

Let $\zeta_1 \in (0,1)$. By applying the triangle inequalitiy in \eqref{3.13} and the fact that $|\zeta_3|\le 1$, we obtain
\begin{equation}
    |\mathcal{L}|= \frac{1}{12} \zeta_{1}(1-\zeta_{1}^{2})\; \Psi(A,B,C), \label{eq:3.5}
\end{equation}
where 
\begin{equation*} 
    \Psi(A,B,C):= |A+B\zeta_2+C\zeta_2 ^2| + 1-|\zeta_2 ^2|,
\end{equation*}
with 
\begin{equation}
    A=\frac{4\zeta_1^3}{3(1-\zeta_1^2)},\; B=-\frac{3}{2}\zeta_1, \;C=-\left(\frac{3+\zeta_1^2}{4\zeta_1}\right). \label{eq:3.6}
\end{equation}

In light of Lemma \ref{lem:2} and the expressions of $A$, $B$ and $C$ obtained in \eqref{eq:3.6}, we study the following cases. Let us assume that $\zeta_{1} \in X $ where $X=(0,1)$. Clearly, we can observe that $AC<0$ for $\zeta_{1} \in X $. This case demands the following sub-cases:

\begin{enumerate}[label=(\alph*)]
    \item \label{a}  Simply for each $\zeta_1 \in X$
    \[T_1(\zeta_1):=|B|-2(1-|C|)=2\zeta_1-2+\frac{3}{2\zeta_1}>0,\]
    implies
    \[|B|>2(1-|C|).\]

    \[T_2(\zeta_1):=-4AC\left(\frac{1}{C^2}-1\right)-B^2=\frac{-\zeta_1^2(225+11\zeta_1^2)}{12(3+\zeta_1^2)}\leq 0,\]
    implies
    \[-4AC\left(\frac{1}{C^2}-1\right)\leq B^2.\]
    
     \noindent We can infer from the above computations that $T_1(X^*) \cap T_2(X^*)$ is empty. Consequently, for every $\zeta_1 \in X$, this case does not occur, as stated by Lemma \ref{lem:2}.
 \\ 
    \item  In view of $\zeta_1 \in X$, the relation $4(1 +|C|)^2$ and $- 4AC\left(\frac{1}{C^2} -1)\right)$ become
    \[T_3(\zeta_1):=4(1+|C|)^2=\frac{(3+4\zeta_1+\zeta_1^2)^2}{4\zeta_1^2}>0,\]
     \[T_4(\zeta_1):=-4AC\left(\frac{1}{C^2}-1\right)=\frac{4\zeta_1^2(-9+\zeta_1^2)}{3(3+\zeta_1^2)}<0.\]
     Therefore, $\min\{T_3(\zeta_1),T_4(\zeta_1)\}= T_4(\zeta_1)$.
    
    From case \ref{a}, we know that:
    \[-4AC\left(\frac{1}{C^2}-1\right)\leq B^2.\]
    Thus this case fails for $\zeta_1 \in X$, as stated by Lemma \ref{lem:2}.
\\
    \item For $\zeta_1 \in X$ we assume that $T_5(\zeta_1):= |A B|-|C|(|B|+4|A|)$.
    Therefore, 
    \[T_5(\zeta_1)=\frac{27+78\zeta_1^2-25\zeta_1^4}{24(-1+\zeta_1^2)}<0\]
    implies
    \[|A B|<|C|(|B|+4|A|).\]
     Thus the case fails to exist for $\zeta_1 \in X$, as stated by Lemma \ref{lem:2}.
\\
     \item For $\zeta_1 \in X$ we assume that 
     \[T_6(\zeta_1):= |A B|-|C|(|B|-4|A|) =\frac{27-114\zeta_1^2-89\zeta_1^4}{24(-1+\zeta_1^2)}\leq 0.\]
    We observe that the above equation holds true for $0 \le \zeta_1 \le \zeta' = \left(\sqrt{\frac{-57+6\sqrt{157}}{89}}\right)\approx 0.451959 \in X$.
    \noindent
    Therefore, by using Lemma \ref{lem:2}, we get, \[\Psi(A,B,C)=R(A,B,C)=-|A|+|B|+|C|.\]
    Using \eqref{eq:3.5} to proceed towards our desired result,
    \begin{eqnarray}
        |\mathcal{L}|&\leq& \frac{1}{12} \zeta_1(1-\zeta_1^2) (-|A|+|B|+|C|)\nonumber \\
         &=&\frac{1}{144}\left(9+12\zeta^2-37\zeta_1^4\right) = \phi(\zeta_1),  \label{3.16}
    \end{eqnarray}
 where
 \begin{eqnarray*}
    \phi(t)&:=&\frac{1}{144}\left(9+12t^2-37t^4\right)
 \end{eqnarray*}
Since $\phi'(t)=0$ for $t \in (0,1)$ holds true for the only possible value of $t_0=\sqrt{6/37}<\zeta_1'$, we deduce that $\phi$ is increasing in $[0,t_0]$ and decreasing in $[t_0,\zeta_1']$. Therefore, for $0<t<\zeta_1'$, we obtain
\[\phi(\zeta_1) \leq \phi(t_0) \approx 0.0692568.\]
Hence, from \eqref{3.16} we see that
\[|\mathcal{L}| \leq \phi(\zeta_1) \leq \phi(t_0) \approx 0.0692568 <1/9.\]
\\
    \item By applying Lemma \ref{lem:2} for $\zeta_1'<\zeta_1<1$, we get
    \begin{eqnarray*}
         \mathcal{L}&\leq& \frac{1}{12} \zeta_1(1-\zeta_1^2) (|C|+|A|)\sqrt{1-\frac{B^2}{4AC}}\\
          &=&\frac{1}{576} \sqrt{\frac{75 - 11 \zeta_1^2}{3 + \zeta_1^2}} \left( 9 - 6 \zeta_1^2 + 13 \zeta_1^4 \right) = \phi_2(\zeta_1)
    \end{eqnarray*}
    where
    \begin{eqnarray*}
        \phi_2(t):= \frac{1}{576} \sqrt{\frac{75 - 11 t^2}{3 + t_1^2}} \left( 9 - 6 t^2 + 13 t^4 \right)
    \end{eqnarray*}
    It is observed that $\phi_2$ is an increasing function such that $\phi_2'(t)\geq 0$ for $\zeta_1'<t<1$, therefore after a simple computation, we get 
    \[|\mathcal{L}|\leq \phi_2(\zeta_2)\leq \phi_2(\zeta_1')\approx 0.451959 <1/9.\]
\end{enumerate}
Based on the results obtained, we can see that the inequality \eqref{eq:3.1} holds true.

To demonstrate the sharpness of the bound, we can utilize the functions $f_0(z)$ and $f_{1}(z)$ for the upper and lower bounds respectively, similar to those employed in Theorem \ref{thm1}. This concludes the proof.
\end{proof}

\section{\hspace{5pt} Second Toeplitz Determinant}
By applying the definition of the Toeplitz determinant, we obtain the following expression for the second Toeplitz determinant:
\[
T_{2,1}(f) = a_1^2 - a_2^2.
\]
By varying the coefficients, the Toeplitz determinant can be computed for different sets of coefficients. For instance, by using equation \eqref{1.12}, we can determine the determinant for logarithmic coefficients:
\begin{equation} \label{4.1}
    T_{2,1}(F_{f}/2) = \begin{vmatrix}
\gamma_1 & \gamma_{2} \\
\gamma_{2} & \gamma_{1}
\end{vmatrix} = \gamma_1^2 - \gamma_2^2 = \frac{1}{16}\left(4a_2^2 - a_2^4 - 4a_3^2 + 4a_2^2a_3\right).
\end{equation}
Similarly, by applying equation \eqref{1.14}, we can compute the Toeplitz determinant for logarithmic coefficients of inverse functions:
\begin{equation} \label{4.2}
    T_{2,1}(F_{f^{-1}}/2) = \begin{vmatrix}
\Gamma_1 & \Gamma_{2} \\
\Gamma_{2} & \Gamma_{1}
\end{vmatrix} = \Gamma_1^2 - \Gamma_2^2 = \frac{1}{16}\left(-9a_2^4 + 4a_2^2 - 4a_3^2 + 12a_2^2a_3\right).
\end{equation}

Let $T_{2,1}$ be a functional and $F_{f_\theta}$, $F_{f_{{\theta}^{-1}}}$ be functions. Then, we can observe the rotational invariant characteristic of the functional $T_{2,1}$ with respect to the functions $F_{f_\theta}$ and $F_{f_{{\theta}^{-1}}}$ will be similar to $H_{2,1}(F_{f}/2)$ and $H_{2,1}(F_{f^{-1}}/2)$, as found in section \ref{sec3}.

\subsection{
    Sharp bound of \texorpdfstring{$|T_{2,1}(F_{f}/2)|$}{|T2,1(Ff/2)|} 
    for \texorpdfstring{$\mathcal{S}^*_\rho$}{S* (rho)}
}
This section explores the sharp bound estimation of the Toeplitz determinant linked to the logarithmic coefficients of starlike functions $(|T_{2,1}(F_{f}/2)|)$ defined on $\rho(\mathbb{D})$.

\begin{theorem} \label{thm3}
Let $f \in \mathcal{S}_{\rho}^{*}$, then
\begin{equation}
    \frac{1}{16} \le|T_{2,1}(F_{f}/2)| \leq \frac{1}{2}.\nonumber
\end{equation} 
The above inequality is sharp.
\end{theorem}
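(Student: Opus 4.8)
The plan is to follow the same machinery used for Theorem \ref{thm1}: parametrise $f\in\mathcal{S}^*_\rho$ via a Carathéodory function $p$, express $T_{2,1}(F_f/2)=\gamma_1^2-\gamma_2^2$ in terms of $p_1,p_2$ using \eqref{log coeff} and \eqref{eq:petal}, and then substitute the Lemma \ref{lem:1} parametrisation $p_1=2\zeta_1$, $p_2=2\zeta_1^2+2(1-\zeta_1^2)\zeta_2$ with $\zeta_1\in[0,1]$, $\zeta_2\in\overline{\mathbb D}$. Since \eqref{4.1} involves only $a_2,a_3$, hence only $p_1,p_2$, the functional $\mathcal{L}:=T_{2,1}(F_f/2)$ will reduce to an explicit expression in $\zeta_1$ and $\zeta_2$ with no $\zeta_3$ term, which is the simplifying feature that makes this case cleaner than Theorem 3.2.

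First I would compute $\gamma_1=\tfrac12 a_2=\tfrac14 p_1$ and $\gamma_2=\tfrac12(a_3-\tfrac12 a_2^2)=\tfrac18(p_2-\tfrac12 p_1^2)$, so that $\mathcal{L}=\tfrac1{16}p_1^2-\tfrac1{64}(p_2-\tfrac12 p_1^2)^2$. Substituting the Lemma \ref{lem:1} formulas gives $\mathcal{L}$ as a polynomial in $\zeta_1\in[0,1]$ and $\zeta_2\in\overline{\mathbb D}$; one expects the $p_2-\tfrac12p_1^2$ combination to collapse nicely (it equals $\tfrac32 p_1^2$-type terms cancel, leaving something proportional to $\zeta_1^2+(1-\zeta_1^2)\zeta_2$ times a constant, minus $\tfrac12(2\zeta_1)^2$). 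Then I would handle the boundary values $\zeta_1=0$ and $\zeta_1=1$ directly (these should give $|\mathcal{L}|\le 1/16$ at $\zeta_1=0$ and a specific value at $\zeta_1=1$), and for $\zeta_1\in(0,1)$ apply the triangle inequality in $|\zeta_2|\le 1$, reducing to maximising a quadratic-in-$\zeta_2$ modulus; because $B$ and the structure are simple, I expect either a direct elementary estimate or a one-line application of Lemma \ref{lem:2} in the $AC\ge 0$ branch, as in case I of Theorem \ref{thm1}. Finally I would optimise the resulting one-variable function of $\zeta_1$ over $[0,1]$ to pin down both the upper bound $1/2$ and the lower bound $1/16$.

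For sharpness, the upper bound $1/2$ should be attained by a function whose second coefficient is extremal in modulus — the natural candidate is the rotation/appropriate choice making $\zeta_1=1$, i.e. the Koebe-type extremal $f_0(z)$ of \eqref{1.45} (for which $a_2=1$, $a_3=\tfrac12$), giving $\gamma_1=\tfrac12$, $\gamma_2=0$ and hence $|T_{2,1}(F_f/2)|=\tfrac14\cdot\ldots$; I would check whether instead $f_0$ yields the lower bound and the function $f_1(z)=z\exp(\int_0^z \sinh^{-1}(t^2)/t\,dt)$ from Theorem \ref{thm1} (with $a_2=0$, $a_3=\tfrac12$, so $\gamma_1=0$, $\gamma_2=\tfrac14$) yields the other endpoint. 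Matching the computed extremal values of the $\zeta_1$-function against $1/2$ and $1/16$ will determine which extremal goes with which bound.

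The main obstacle I anticipate is not conceptual but bookkeeping: correctly simplifying $\mathcal{L}$ after substitution and then carrying out the $\zeta_1$-optimisation over $[0,1]$ — in particular verifying that the maximum of $|\mathcal{L}|$ over the interior $\zeta_1\in(0,1)$ does not exceed the endpoint value $1/2$, and that the minimum over the whole range is exactly $1/16$ rather than something smaller (one must be careful that $|\mathcal{L}|$ could dip below $1/16$ for intermediate $\zeta_1$, which would break the claimed lower bound; if so, the lower bound would need a separate argument showing $\gamma_1^2-\gamma_2^2$ cannot be made small, presumably using $|\gamma_1|\le 1$ together with a lower bound forcing $|\gamma_2|$ small whenever $|\gamma_1|$ is, or a direct sign analysis of the real expression).
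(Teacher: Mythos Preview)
Your overall plan---parametrise via $p\in\mathcal P$, substitute Lemma~\ref{lem:1}, then bound---is exactly what the paper does.  Two points of divergence matter, however.

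\textbf{The paper's execution is more elementary than you anticipate, and coarser.}  After substituting $2p_2=p_1^2+(4-p_1^2)\zeta$ into \eqref{l3} the paper obtains
\[
T_{2,1}(F_f/2)=\tfrac1{256}\bigl(16p_1^2-p_1^4\zeta^2+8p_1^2\zeta^2-16\zeta^2\bigr),
\]
applies the triangle inequality \emph{term by term}, sets $p:=|p_1|\in[0,2]$ and $|\zeta|\le1$, and reads off $|T_{2,1}|\le\tfrac1{256}(p^4+24p^2+16)\le\tfrac12$.  No case split, no Lemma~\ref{lem:2}.  If you instead keep the combination $(4-p_1^2)^2\zeta^2$ intact (as your plan naturally would, since $p_2-\tfrac12p_1^2=2(1-\zeta_1^2)\zeta_2$), you get $\mathcal L=\tfrac14\zeta_1^2-\tfrac1{16}(1-\zeta_1^2)^2\zeta_2^2$ and hence $|\mathcal L|\le\tfrac1{16}(1+\zeta_1^2)^2\le\tfrac14$, \emph{not} $\tfrac12$.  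So your route, carried out cleanly, yields a tighter bound than the one stated; the paper's $\tfrac12$ comes precisely from splitting $(4-p^2)^2$ before taking moduli.

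\textbf{Your extremal candidates do not match the paper's.}  You correctly compute that $f_0$ gives $|T_{2,1}|=\tfrac14$ and $f_1$ gives $\tfrac1{16}$; neither produces $\tfrac12$.  The paper instead invokes
\[
f_2(z)=z\exp\!\Bigl(\int_0^z\!\tfrac{\sinh^{-1}(\sqrt8\,it^2)}{t}\,dt\Bigr),\qquad
f_3(z)=z\exp\!\Bigl(\int_0^z\!\tfrac{\sinh^{-1}(it^2)}{t}\,dt\Bigr),
\]
with $a_2=0$, $a_3=\sqrt2\,i$ (resp.\ $a_3=i/2$), for which \eqref{4.5} returns $\tfrac12$ (resp.\ $\tfrac1{16}$).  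If you pursue your plan you will not recover the paper's upper extremal; this is a genuine gap between your proposal and the paper's proof.

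\textbf{Your concern about the lower bound is legitimate.}  The paper's argument only shows that its upper-bound \emph{function} $\tfrac1{256}(p^4+24p^2+16)$ ranges over $[\tfrac1{16},\tfrac12]$; it gives no separate argument that $|T_{2,1}(F_f/2)|\ge\tfrac1{16}$ for every $f\in\mathcal S^*_\rho$.  Your instinct that this step needs additional justification is correct, and the paper does not supply one.
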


\begin{proof} Let us suppose that $f \in \mathcal{S}^*_\rho$ be of the form \eqref{eq:1.1}. Then there exist a Schwarz function, $w(z)=(p(z)-1)/(p(z)+1)$ such that it satisfies \eqref{eq:2.3}. From \eqref{4.1} and \eqref{eq:petal} we obtain:
\begin{eqnarray} 
    T_{2,1}(F_{f}/2)&=&\gamma_1^2-\gamma_2^2 \nonumber\\
    &=& \frac{1}{16}\left(-a_2^4+4a_2^2-4a_3^2+4a_2^2a_3\right) \label{4.5}\\
    &=&\frac{1}{256}\left(-p_1^4-4p_2^2+4p_1^2(4+p_2)\right) \label{l3}.
\end{eqnarray}
Using \eqref{l1} and \eqref{l2} from Lemma \ref{lem:1}, we derive the relation:
\[
2p_2 = p_1^2 + (4 - p_1^2)\zeta_2,
\]
where $|\zeta_2| \leq 1$. By substituting the value of $p_2$ in \eqref{l3} and assuming $\zeta_2:=\zeta$, we derive the following equation: 
\[T_{2,1}(F_{f}/2)=\frac{1}{256} \left( -p_1^4\zeta^2-16\zeta^2+16p_1^2+8p_1^2\zeta^2\right). \]
In view of triangle inequality, the above equation can be expressed as:
\[|T_{2,1}(F_{f}/2)|\le \frac{1}{256}\left( |p_1^4||\zeta^2|+ 16|\zeta^2|+16|p_1^2|+8|p_1^2||\zeta^2|\right).\]
Given the rotational invariance of $\mathcal{P}$, we restrict our analysis to non-negative values of $p_1$. Since $|p_1| \leq 2$, we have $0 \leq p_1 \leq 2$. Letting $p := |p_1|$, this implies $p \in [0, 2]$. Thus,
\begin{equation}
    |T_{2,1}(F_{f}/2)| \le \frac{1}{256} \left( p^4+24p^2+16\right).\nonumber
\end{equation}
Hence for $p \in [0,2]$,
\begin{equation}
    \frac{1}{16} \le |T_{2,1}(F_{f}/2)| \le \frac{1}{2}.\nonumber
\end{equation}
To complete the proof, it remains to verify the sharpness of the bound. Thus, in order to establish the upper bound, consider the analytic function \( f_2 \in \mathcal{S}^*_\rho \), defined by:
\[
f_2(z) = z \exp \left( \int_0^z \frac{\sinh^{-1} (\sqrt{8} i t^2)}{t} \, dt \right) = z + \sqrt{2} i z^3 - z^5 + \frac{\sqrt{2} i z^7}{9} + \ldots \;.
\]
For the lower bound, we examine the analytic function \( f_3 \in \mathcal{S}^*_\rho \), defined by:
\[
f_3(z) = z \exp \left( \int_0^z \frac{\sinh^{-1} (i t^2)}{t} \, dt \right) = z + \frac{i z^3}{2} - \frac{z^5}{8} + \frac{i z^7}{144} + \ldots \;.
\]
By comparing the coefficients, we can easily determine the values of \( a_2 \) and \( a_3 \). Substituting these into \eqref{4.5} yields the desired equality, completing the proof.
\end{proof}

\subsection{
    Sharp bound of \texorpdfstring{$|T_{2,1}(F_{f^{-1}}/2)|$}{|T2,1(Ff-1/2)|} 
    for \texorpdfstring{$\mathcal{S}^*_\rho$}{S* (rho)}
}
This section will center on estimating the sharp bound around the petal shaped domain for $|T_{2,1}(F_{f^{-1}}/2)|$, where $f \in \mathcal{S}^*_\rho$.

\begin{theorem}
Let $f \in \mathcal{S}^*_\rho$, then
\begin{equation}
    \frac{1}{16} \le |T_{2,1}(F_{f^{-1}}/2)| \leq \frac{5}{4}. \nonumber
\end{equation} 
The above inequality is sharp.
\end{theorem}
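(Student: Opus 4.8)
The plan is to mirror the argument used for Theorem~\ref{thm3}. Since $f\in\mathcal{S}^*_\rho$, write $\frac{zf'(z)}{f(z)}=1+\sinh^{-1}(w(z))$ with $w=(p-1)/(p+1)$ and $p\in\mathcal{P}$ as in \eqref{pp}; the coefficient relations \eqref{eq:petal} then give $a_2=\tfrac12 p_1$ and $a_3=\tfrac14 p_2$. Substituting these into the expression \eqref{4.2} for $T_{2,1}(F_{f^{-1}}/2)=\Gamma_1^2-\Gamma_2^2$ produces a polynomial in $p_1$ and $p_2$ alone, since $p_3$ (equivalently $a_4$) does not enter $\Gamma_1$ or $\Gamma_2$. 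Hence, in contrast with the Hankel-determinant theorems of Section~\ref{sec3}, Lemma~\ref{lem:2} will not be needed here; the only extra input is the relation $2p_2=p_1^2+(4-p_1^2)\zeta_2$ with $|\zeta_2|\le1$ coming from Lemma~\ref{lem:1}.

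For the upper bound I would apply the triangle inequality to this $(p_1,p_2)$-expression together with the estimate $|p_n|\le2$ valid for $p\in\mathcal{P}$. Using the rotational invariance of $\mathcal{P}$ (already invoked for $T_{2,1}$ above) one may take $p:=|p_1|\in[0,2]$, and after inserting $2p_2=p_1^2+(4-p_1^2)\zeta_2$ with $|\zeta_2|\le1$ the estimate collapses to $|T_{2,1}(F_{f^{-1}}/2)|\le g(p)$ for an explicit polynomial $g$ on $[0,2]$. A routine inspection of the endpoints and of the interior critical point of $g$ then delivers the constant $\tfrac54$. For the lower bound I would argue by direct evaluation: substituting the Taylor coefficients of a suitable extremal function of the form $f(z)=z\exp\!\left(\int_0^z\frac{\sinh^{-1}(c\,t^2)}{t}\,dt\right)$ into \eqref{4.2} pins the value $\tfrac1{16}$.

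Sharpness of both inequalities would be established exactly as in Theorem~\ref{thm1} and Theorem~\ref{thm3}: one exhibits explicit members of $\mathcal{S}^*_\rho$, again of the form above with the constant $c$ chosen unimodular or purely imaginary (in the spirit of $f_0,f_1,f_2,f_3$), reads off $a_2$ and $a_3$ by comparing Taylor coefficients, and verifies that substitution into \eqref{4.2} yields equality, with one function realizing $\tfrac54$ and another realizing $\tfrac1{16}$.

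The algebra and the single-variable optimization are routine; the step I expect to be the main obstacle is establishing sharpness, that is, producing functions of $\mathcal{S}^*_\rho$ for which every application of the triangle inequality is simultaneously an equality, so that the constants $\tfrac1{16}$ and $\tfrac54$ are genuinely attained. A secondary subtlety is the bookkeeping in the reduction: one must decide at which stage to eliminate $p_2$ via Lemma~\ref{lem:1}, since an overly crude estimate can inflate the upper bound while a premature use of $|\zeta_2|=1$ can spoil the lower bound.
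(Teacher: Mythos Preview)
Your outline matches the paper's proof essentially step for step: express $T_{2,1}(F_{f^{-1}}/2)$ in terms of $p_1,p_2$ via \eqref{eq:petal} and \eqref{4.2}, eliminate $p_2$ using the two–term identity $2p_2=p_1^2+(4-p_1^2)\zeta$ from Lemma~\ref{lem:1}, apply the triangle inequality, set $p=|p_1|\in[0,2]$, and optimize the resulting polynomial $\tfrac{1}{256}(9p^4+40p^2+16)$, which is monotone increasing so that the endpoints give $\tfrac{1}{16}$ and $\tfrac{5}{4}$; sharpness is handled exactly as you anticipate, by exhibiting functions of the form $z\exp\!\big(\int_0^z t^{-1}\sinh^{-1}(ct^2)\,dt\big)$ with $a_2=0$ and a suitable purely imaginary $c$. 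Your caution that sharpness is the delicate point is well placed, and your observation that Lemma~\ref{lem:2} is unnecessary here is correct.
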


\begin{proof} Let $f \in \mathcal{S}^*_\rho$ be of the form \eqref{eq:1.1} satisfying the equation \eqref{eq:2.3}.In view of \eqref{4.2} and \eqref{eq:inv}, a simple computation shows that 
\begin{eqnarray} 
    T_{2,1}(F_{f^{-1}}/2)&=&\Gamma_1^2-\Gamma_2^2\nonumber\\
    &=& \frac{1}{16}\left(-9a_2^4+4a_2^2-4a_3^2+12a_2^2a_3\right)\nonumber\\
    &=&\frac{1}{256}\left(-9p_1^4-4p_2^2+4p_1^2(4+3p_2)\right) \nonumber.
\end{eqnarray}
Using Lemma \ref{lem:1} in above equation, we obtain 
\[T_{2,1}(F_{f^{-1}}/2)=\frac{1}{256} \left( 16p_1^2-4p_1^4+16p_1^2\zeta -4p_1^4\zeta-16\zeta^2+8p_1^2\zeta^2-p_1^4\zeta{^2}\right). \]

In view of triangle inequality, we see above equation can be expressed as:
\[|T_{2,1}(F_{f^{-1}}/2)|\le \frac{1}{256} \left(16 |p_1^2| +4|p_1^4| +16|p_1^2||\zeta| +4|p_1^4||\zeta|+ 16|\zeta^2| +8|p_1^2||\zeta^2| +|p_1^4||\zeta^2|\right).\]

Assume $p:=|p_1|$ where $p_1 \ge 0$ by analogous reasoning in Theorem \ref{thm3} and for the case $|\zeta| \le 1$, we obtain the following result by taking the help of Lemma \ref{lem:1}.
\begin{equation*}
    |T_{2,1}(F_{f^{-1}}/2)| \le \frac{1}{256} \left( 9p^4+40p^2+16\right).
\end{equation*}
Since $p \in [0,2]$. Therefore,
\begin{equation}
    \frac{1}{16} \le |T_{2,1}(F_{f^{-1}}/2)| \le \frac{5}{4}. \label{4.14}
\end{equation}

To demonstrate the sharpness of the upper bound, consider the analytic function $f_4 \in \mathcal{S}^*_\rho$, defined by:
\[
f_4(z) = z \exp \left( \int_0^z \frac{\sinh^{-1} (\sqrt{20} i t^2)}{t} \, dt \right) = z + \sqrt{5} i z^3 - \frac{5 z^5}{2} + \frac{5}{18} \sqrt{5} i z^7 + \ldots \;.
\]
For this function, a straightforward calculation yields
\[|T_{2,1}(F_{f^{-1}}/2)|=\frac{5}{4}.\]
For the sharpness of the lower bound, consider the function $f_2$, defined in Theorem \ref{thm3}. Thus, we achieve equality in \eqref{4.14} which concludes our proof.
\end{proof}

\noindent \textbf{Conflict of interest:} The authors declare no conflict of interest.\\
\noindent \textbf{Authors' Contribution:} Each author contributed equally to the research and preparation of the manuscript.


\begin{thebibliography}{99}
\bibitem{3} Arora, K.,  Kumar, S. S.: Starlike functions associated with a petal shaped domain, Bulletin of the Korean Mathematical Society. Korean Mathematical Society, \textbf{59}(4), (2022), 993–1010. 

\bibitem{5} Carath\'eodory, C.: \"Uber den Variabilit\"atsbereich der Koeffizienten von Potenzreihen, die gegebene Werte nicht annehmen, Mathematische Annalen, {\bf 64}(1), (1907), 95--115.

\bibitem{4} Choi, J.~H., Kim, Y.~C., Sugawa, T.: A general approach to the Fekete-Szeg\"o{} problem, Journal of the Mathematical Society of Japan, {\bf 59}(3), (2007), 707--727.

\bibitem{6} Giri, S., Kumar, S. S.: Toeplitz determinants in one and higher dimensions. Acta Mathematica Scientia, \textbf{44}, (2024), 1931–1944.

\bibitem{1} Kowalczyk, B., Lecko, A.: The Fekete-Szeg\"o{} inequality for close-to-convex functions with respect to a certain starlike function dependent on a real parameter, Journal of Inequalities and Applications, (2014), 1--16.

\bibitem{7} Kumar, S.~S., Verma, N.: Coefficient problems for starlike functions associated with a petal shaped domain, arXiv:2208.14644, (2022).

\bibitem{8} Libera, R.~J., Z\l otkiewicz, E.~J., Early coefficients of the inverse of a regular convex function, Proceeding of the American Mathematical Society, {\bf 85}(2), (1982), 225--230.

\bibitem{9} Ma, W.~C., Minda, D.: A unified treatment of some special classes of univalent functions, Proceedings of the Conference on Complex Analysis, International Press Inc., (1992).

\bibitem{10} Mundalia, M., Kumar, S.~S.: Coefficient problems for certain close-to-convex functions, Bulletin of the Iranian Mathematical Society, {\bf 49}(1), p.5, (2023).

\bibitem{11} Pommerenke, C.: On the Hankel determinants of univalent functions, Mathematika, {\bf 14}(1), (1967), 108--112.

\bibitem{2} Pommerenke, C.: Univalent functions, Vandenhoeck \& Ruprecht, G\"ottingen, (1975).

\bibitem{14} Ponnusamy, S., Sharma, N.~L., Wirths, K.~J.: Logarithmic coefficients of the inverse of univalent functions, Results in Mathematics, {\bf 73}, (2018), 1--20.

\bibitem{13} Ponnusamy, S., Sharma, N.~L., Wirths, K.~J.: Logarithmic coefficients problems in families related to starlike and convex functions, Journal of the Australian Mathematical Society, {\bf 109}(2), (2020), 230--249.

\bibitem{16} Verma, N., Kumar, S.~S.: A conjecture on $H_3(1)$ for certain starlike functions, Mathematica Slovaca, {\bf 73}(5), (2023), 1197--1206.

\bibitem{17} Ye, K., Lim, L.~H.: Every matrix is a product of Toeplitz matrices, Foundations of Computational Mathematics, {\bf 16}(3), (2016), 577--598.

				
\end{thebibliography}

\end{document}